\newtheorem{counter}{Counter}
\newtheorem{lem}[counter]{Lemma}
\newtheorem{thm}[counter]{Theorem}
\newcommand{\R}{\mathbb{R}}
\renewcommand{\H}{\mathcal{H}} %Bbb
\newcommand{\lra}{\longrightarrow}
\renewcommand{\~}{\tilde}
\newcommand{\pd}{\partial}
\newcommand{\fal}{\forall}
\newcommand{\8}{\infty}
\newcommand{\vep}{\varepsilon} %lettere greche
\newcommand{\dt}{\delta}
\newcommand{\om}{\Omega}
\renewcommand{\d}{\,\text{d}}
\DeclareMathOperator{\supp}{\textnormal{supp}}
\definecolor{mygreen}{rgb}{0.1,0.75,0.2}
\title[Ternary Ohta-Kawasaki in 3D]{Uniform bound on the number of partitions for optimal configurations of the Ohta-Kawasaki energy in 3D}
\author[X.Y. Lu and J. Wei]{Xin Yang Lu\address{Department of Mathematical Sciences, Lakehead University, Thunder Bay, ON, Canada}
		\email{xlu8@lakeheadu.ca}
	\and Jun-cheng Wei\address{Department of Mathematics, University of British Columbia, Vancouver, BC, Canada}
		\email{jcwei@math.ubc.ca}}
\begin{document}

	\maketitle
	
	\begin{abstract}
We study a 3D ternary system derived as a sharp-interface limit of the Nakazawa-Ohta density functional theory of triblock copolymers,
which
 combines an interface
energy with a long range interaction term.
Although both the binary case in 2D and 3D, and the ternary case in 2D, are quite well studied, very little is known about the ternary case in 3D. In particular, it is even unclear whether minimizers are made of finitely
many components. In this paper we provide a positive answer to this, by proving that
the number of components in a minimizer is bounded from above by some quantity depending
only on the total masses and the interaction coefficients.
One key difficulty is that the 3D structure
prevents us from uncoupling the Coulomb-like long range interaction 
from the perimeter term, hence
the actual shape of minimizers is unknown, not even for small masses. This is due to
the lack of a quantitative isoperimetric inequality with two mass constraints in 3D,
and it
 makes
the construction of competitors significantly more delicate.
	\end{abstract}

\bigskip
	
	% REQUIRED

		{\bf Keywords.}
Pattern formation, small volume-fraction limit, triblock copolymers.

\medskip
	
	% REQUIRED

				{\bf AMS Subject Classification.}
49S05, 35K30, 35K55

	\bigskip
	
	\section{Introduction}

Energy functionals entailing a direct competition between an attractive short-range force and a repulsive Coulombic long-range force are mathematically studied intensively in recent years to understand physical problems such as Gamow's liquid drop problem and self-assembly of block copolymers.

In Gamow's liquid drop model \cite{ga}, the volume of the nucleus $\Omega \subset \mathbb{R}^3$ is fixed, that is, $|\Omega | = m$. Here $m$ is referred to as ``mass''.
The binding energy is given by
\begin{eqnarray*}
	\mathcal{E}_{\text{liquid}}(\Omega) := \text{Per}(\Omega) + \frac{1}{8\pi} \iint_{\Omega \times \Omega} \frac{1}{|x-y|} \d x \d y,
\end{eqnarray*}
where the first term is the perimeter term which is $\mathcal{H}^2(\Omega) $, the surface area of $\Omega$, and it arises because of lower nucleon density near the nucleus boundary; the second term is the Coulombic term which is introduced due to the presence of positively charged protons \cite{rci}.  

In Ohta and Kawasaki's diblock copolymer model \cite{equilibrium}, the free energy is represented by
\begin{eqnarray*}
	\mathcal{E}_{\text{diblock}} (\Omega) := \text{Per}(\Omega) + \gamma \iint_{\Omega \times \Omega} G(x,y) \; \d x \d y,
\end{eqnarray*}
where the first term is the perimeter term $\mathcal{H}^2(\Omega) $ which favors a large ball; the second term prefers splitting and models long-range interaction between monomers due to the connectivity of different subchains in copolymer molecules. Here
\begin{eqnarray*}
	G(x,y) = \frac{1}{4\pi |x-y| } + R(x,y)
\end{eqnarray*}
is the Green's function of $-  \triangle$ operator in $\mathbb{R}^3$, $R(x,y)$ is the regular part of $G(x,y)$, and $\gamma$ is the long-range interaction coefficient which is determined by the percentage of each type monomer, the number of all monomers in a chain molecule, the repulsion between unlike monomers, and the average distance between two adjacent monomers \cite{onDerivation}. During each experiment, the total mass of each type monomer is fixed. So the energy is minimized under the mass constraint $|\Omega | = m$.

The diblock copolymer model is a model in binary systems. In this paper we study a counter model in ternary systems which was introduced by Nakazawa and Ohta to study triblock copolymers \cite{microphase}. A triblock copolymer is a chain molecule consisting of three types of subchains, a subchain of type A monomers connected to a subchain of type B monomers and then connected to a subchain of type C monomers. Block copolymers can be used as a material in 
artificial organ technology and 
controlled drug delivery.

The free energy functional of triblock copolymers used here is a sharp interface model, 
 derived as the
 $\Gamma$-limit of Nakazawa and Ohta's diffuse interface model, by Ren and Wei in \cite{Ren2003TriblockCT,Ren2003triblock}
\begin{align*}
	\mathcal{E}_{\text{triblock}} (\Omega_1, \Omega_2) := \frac{1}{2} \sum_{i=0}^2 \text{Per}(\Omega_i) + \sum_{i,j=1}^2  \gamma_{ij} \iint_{\Omega_i \times \Omega_j} G(x,y) \; \d x \d y.
\end{align*}
Here $\om_0 = (\om_1\cup \om_2)^c$, the perimeter term is defined by
\begin{align*}
	\frac{1}{2} \sum_{i=0}^2 \text{Per}(\Omega_i)  = 
	\sum_{0\le i< j\le 2}\H^2(\pd \om_i\cap \pd\om_j),
\end{align*}
and the long-range interaction coefficients $\gamma_{ij} $ form a $2\times 2$ symmetric matrix.
Using a ``droplet" scaling argument, as done by Choksi and Peletier in 
\cite{ChoksiPeletier,ChoksiPeletierdiffuse}, and by Alama, Bronsard, the first author, and Wang in
\cite{Alama2019PeriodicMO},
it can be shown that the leading order of the free energy takes the form
\begin{align}
\label{Ohta-Kawasaki energy first order}
E_0(\om_1,\om_2) = \sum_k e_0(|\om_{1,k}|,|\om_{2,k}| ), \qquad \om_i = \cup_k \om_{i,k},\quad i=1,2,
\end{align}
where
\begin{align*}
e_0 &:[0,+\8) \times [0,+\8)\lra \R,\\
e_0(m_1,m_2)&:=\inf \bigg\{ \sum_{0\le i< j\le 2}\H^2(\pd \om_i\cap \pd\om_j)
\\
&\qquad
 +
\sum_{i,j=1}^{2}\frac{ \Gamma_{ij} }{4\pi} \int_{\om_i\times \om_j} \frac{1 }{|x-y| } \d x \d y
: |\om_i |=m_i,\ i=1,2
\bigg\}, 
\end{align*}
where $\Gamma_{ij}$ is a suitable scaling of $\gamma_{ij}$, which we will present in Section \ref{Setting up the problem}.
That is, $E_0$ seeks the optimal partition $\om_i = \bigcup_k \om_{i,k}$,
with each couple $(\om_{1,k},\om_{2,k})$ minimizing $e_0$.

	\section{Setting up the problem}\label{Setting up the problem}
	The aim of this section is to introduce the main energy of this paper.
	Choksi and Peletier showed in \cite[Theorem~4.2]{ChoksiPeletier} that, 
	with the domain being the unit torus $\mathbb{T}^3$,
	in the small mass volume fraction regime,
	the first order $\Gamma$-limit
	of the energies (see \cite[Equation~(1.8)]{ChoksiPeletier})
	\begin{align*}
E_\eta^{3d} (v):=\begin{cases}
\eta \int_{\mathbb{T}^3} |\nabla v|\d x +\eta \big\|  v-\frac{1}{|\mathbb{T}^3|}
\int_{\mathbb{T}^3}v\d x
 \big\|_{\H^{-1}(\mathbb{T}^3)}^2 & \text{if } v\in BV(\mathbb{T}^3;\{0,\eta^{-3}\}),\\
 +\8&\text{otherwise},
\end{cases}
	\end{align*}
	 is of the form
	\[\text{perimeter} + \text{long range interaction},\]
	i.e., 
\begin{align}
E_0^{3d} (v) :=\begin{cases}
\sum_{k=0}^{\8} e_0(m_k) & \text{if } v=\sum_{k=0}^{\8} m_k \dt_{x_k}, \ \sum_{i=0}^{\8} m_k=M=\text{total mass},\notag \\
+\8 & \text{otherwise},\notag
\end{cases}
\end{align}
with
	\[ e_0^{3d}(m)= \inf\bigg\{ \int_{\R^3} |\nabla z| \d x+ \Gamma \|z\|_{H^{-1}(\R^3)}^2:z\in BV(\R^3;\{0,1\}), \ \|z\|_{L^1(\R^3)}=M\bigg\}. \]
The $H^{-1}$ norm can be made explicit:
\[\|z\|_{H^{-1}(\R^3)}^2 = \int_{ \R^3\times\R^3 }  G(|x-y|) z(x)z(y)\d x\d y, \]
where $G$ denotes the Green's function of $\R^3$.
	That is, the minima seeks the optimal partition, in which each component minimizes
	the energy 
	$e_0^{3d}$.
	An analogous result, but for ternary systems on the two dimensional torus,
	was obtained in \cite[Theorem~3.2]{Alama2019PeriodicMO}.

	With the same arguments as in \cite{ChoksiPeletier,Alama2019PeriodicMO},
	we can show that
	again,	
with the domain being the unit torus $\mathbb{T}^3$,
in the small mass volume fraction regime,
	the first order $\Gamma$-limit of the energies
	(which are the analogue of \cite[Equation~(1.8)]{Alama2019PeriodicMO}
			 for ternary systems in 3D)
\begin{align*}
E_{ternary,\eta}^{3d} (v_{1,\eta},v_{2,\eta}) &:=\begin{cases}
\frac{\eta}{2}\sum_{i=0}^2\int_{\mathbb{T}^3} |\nabla v_{i,\eta}|\d x 
\\+\sum_{i,j=1}^2
\eta^4\gamma_{ij}\int_{\mathbb{T}^3\times\mathbb{T}^3}G_{{\mathbb{T}^3}}(|x-y|)
v_{i,\eta}(x) v_{i,\eta}(y)\d x\d y
& \text{if } v_{1,\eta},v_{2,\eta}\in BV(\mathbb{T}^3;\{0,\frac{1}{\eta^{3}}\}),\\
+\8&\text{otherwise},
\end{cases}\\
G_{{\mathbb{T}^3}} & :=\text{Green's function of } \mathbb{T}^3 \text{ with zero average},
\end{align*}
	can be again written 
	in the form
	\begin{align}
	E_{ternary,0}^{3d} (v_1,v_2)& :=\begin{cases}
	\sum_{k=0}^{\8} e_0(m_{1,k},m_{2,k} ) & \text{if } v_i=\sum_{k=0}^{\8} m_{i,k} \dt_{x_{i,k}}, \ \sum_{k=0}^{\8} m_{i,k}=M_i,\\
	+\8 & \text{otherwise},
	\end{cases} \label{definition of the energy in ternary 3d}\\
	M_i&=\text{total mass of type }i \text{ constituent},\qquad i=1,2,\notag
	\end{align}
	where
		\begin{align*}
		e_0(m_1,m_2) = \inf\bigg\{ &\sum_{0\le i< j\le 2}\H^2(\pd \om_i\cap \pd\om_j)+ \sum_{i,j=1}^2\Gamma_{ij} \|z_i\|_{H^{-1}(\R^3)}^2:\\
		&z_i\in BV(\R^3;\{0,1\}),\  \|z_i\|_{L^1(\R^3)}=m_i,\\
		& \om_i = \supp z_i, \ i=1,2,\  |\om_1\cap \om_2|=0\bigg\},\qquad
		\om_0 = (\om_1\cup \om_2)^c,
		\end{align*} 
%$\om_0 = (\om_1\cup \om_2)^c$,
		and $\Gamma_{ij}\eta^{-3}=\gamma_{ij}\ge 0 $ are coefficients penalizing the Coulomb interaction.
		Observe that the problem of minimizing $	E_{ternary,0}^{3d} $ is determined
		once we fix the total masses $M_i$ and the interaction coefficients
		$\Gamma_{ij}$. Each couple of sets $(\om_1,\om_2)$, with the appropriate masses
		and minimizing
		$e_0$, is referred to as a ``cluster''. 
Similarly to \cite{Alama2019PeriodicMO}, 
it is not restrictive to require $x_{i,k} \neq x_{i,j} $ whenever $k\neq j$, $i=1,2$, but
allowing for $x_{1,k} \neq x_{2,j} $ for some $k,j$. Thus, it is possible to 
impose $x_{1,k}=x_{2,k}=x_k$ for all $k$, and
allow for
some masses $m_{i,k}=0$, but still requiring $m_{1,k}+m_{2,k}>0$ for all $k$.
 This is due to the
ternary nature of the system, to account for the fact that at some $x_{k}$ we might
have both types of constituents (hence both $m_{1,k}$ and $m_{2,k}$ are positive), while at another $x_{j}$ we might have only one type of constituent (hence only one between $m_{1,j}$ and $m_{2,j}$ is positive).
%
%Observe that the limit energies \cite[Equation~(1.8)]{Alama2019PeriodicMO},
%\cite[Equation~(1.8)]{Alama2019PeriodicMO}, and
%\eqref{definition of the energy in ternary 3d} are actually quite similar to
%	$\mathcal{E}_{\text{liquid}}$, $\mathcal{E}_{\text{diblock}}$, and
%	$\mathcal{E}_{\text{triblock}}$. The main difference is that
%	the interaction between different components is suppressed in 
%	\cite[Equation~(1.8)]{Alama2019PeriodicMO},
%	\cite[Equation~(1.8)]{Alama2019PeriodicMO}, and
%	\eqref{definition of the energy in ternary 3d}, due to the blow-up arguments that
%	make the distance between different components infinite. 
	
	\medskip
	
	Next,
	we introduce the main energy of this paper:
	given connected sets $\om_1,\om_2$, with 
	$\mathbf{1}_{\om_i} \in BV(\R^3;\{0,1\})$ and
	$|\om_1\cap\om_2|=0$,
	define the energy
	\begin{align}
	E(\om_1,\om_2) &:= \sum_{0\le i< j\le 2}\H^2(\pd \om_i\cap \pd\om_j) 
+
	\sum_{i,j=1}^{2} \gamma_{ij} \int_{\om_i\times \om_j} |x-y|^{-1}\d x\d y,
		\label{Ohta-Kawasaki energy}
	\end{align}
	where $		\om_0 = (\om_1\cup \om_2)^c$. Here $\gamma_{ij}$ denote the interaction strengths, and are positive,
	of order $O(1)$.
Then, given disjoint unions
\[\Big(\bigsqcup_k \om_{1,k},\bigsqcup_k \om_{2,k}\Big), \]
with $\om_{i,k}$ being connected, the total energy of this configuration is defined
 by
\[ \mathcal{E}\Big(\bigsqcup_k \om_{1,k},\bigsqcup_k \om_{2,k}\Big):=
\sum_k E(\om_{1,k},\om_{2,k}).
 \]
Note that our main energy is quite similar to 
	\cite[Equation~(1.8)]{Alama2019PeriodicMO},
\cite[Equation~(1.8)]{Alama2019PeriodicMO}, and
\eqref{definition of the energy in ternary 3d}, with the main difference being in the 
Green's function in the interaction term; and also to 	$\mathcal{E}_{\text{liquid}}$, $\mathcal{E}_{\text{diblock}}$, and
	$\mathcal{E}_{\text{triblock}}$, with the main difference being that the interaction
	between different components are suppressed.
		
		\medskip
		
		In the following, when we say ``optimal configuration'', 
		unless otherwise specified,
		we mean a configuration $(\bigsqcup_k \om_{1,k},\bigsqcup_k \om_{2,k})$
		minimizing $\mathcal{E}$. 
		
		\medskip
		
%\[\text{surface area}+ \text{Coulomb interaction}. \]
%\begin{align*}
%%\label{Ohta-Kawasaki energy first order}
%E_0(\om_1,\om_2) = \sum_k e_0(|\om_{1,k}|,|\om_{2,k}| ), \qquad \om_i = \bigsqcup_k \om_{i,k},\quad i=1,2,
%\end{align*}
%where
%\begin{align*}
%e_0 &:[0,+\8) \times [0,+\8)\lra \R,\\
% e_0(m_1,m_2)&:=\inf \bigg\{{\color{blue} \sum_{0\le i< j\le 2}\H^2(\pd \om_i\cap \pd\om_j)} +
%\sum_{i,j=1}^{2} \gamma_{ij} \int_{\om_i\times \om_j} |x-y|^{-1}\d x\d y
%: |\om_i |=m_i,\ i=1,2
%\bigg\}. 
%\end{align*}
%%
% That is, $E_0$ seeks the optimal partition $\om_i = \bigsqcup_k \om_{i,k}$,
% with each couple $\om_{i,k}$ minimizing $e_0$. 
 %
 In 2D, due to the fact that the Green's function is a logarithmic term, the interaction
 was simply the product of the masses, hence 
it was equivalent to minimize the perimeter, subject to two mass constraints.
It is well known that the double bubble is the unique such minimizer
 (see e.g. \cite{foisy1993standard,morgan2002standard} for the 2D case, and \cite{hutchings2002proof}
 for the 3D case, and also \cite{reichardt2003proof,dorff2010proof,milman2018gaussian,morgan2001geometric}). In 
 the ternary
 3D case, however, such simplification
 is not available, and the shape of the minimizers is unclear,
even for small masses. This is a significant hurdle, and studying the 
shape of minimizers is hindered by the lack of a quantitative isoperimetric inequality
with two mass constraints in 3D

Therefore,.
 a priori, it is even unclear whether optimal configurations have
finitely many clusters, as we cannot exclude the presence of infinitely many components
with very small masses.
 Our main result is to show that
 this is not the case:
 \begin{thm}\label{finite and uniform number of clusters}
 	There exists a computable constant $K=K(M_1,M_2,\gamma_{11},\gamma_{22})$ such that
 	any optimal configuration has at most $K$ clusters.
 \end{thm}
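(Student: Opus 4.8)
The plan is to show that an optimal configuration cannot contain too many clusters, by exploiting the fact that clusters of small mass are inefficient: their energy per unit mass blows up, so having many of them can be beaten by merging. First I would record the basic lower bound on the single-cluster energy $E(\om_{1,k},\om_{2,k})$ in terms of the masses $(m_{1,k},m_{2,k})$. Writing $m_k := m_{1,k}+m_{2,k}$, the isoperimetric inequality (applied to $\om_{1,k}\cup\om_{2,k}$, whose perimeter is controlled by $\sum_{0\le i<j\le 2}\H^2(\pd\om_i\cap\pd\om_j)$ up to a constant) gives a perimeter term $\gtrsim m_k^{2/3}$, while the positivity of $\gamma_{ij}$ makes the interaction term nonnegative. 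Thus $E(\om_{1,k},\om_{2,k}) \ge c\, m_k^{2/3}$ for a universal $c>0$. Summing over clusters and using $\sum_k m_k \le M_1+M_2$, this by itself already bounds the number of clusters whose mass is bounded below by a fixed threshold; the real work is ruling out infinitely many clusters of \emph{small} mass.

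The second, and main, step is the construction of a competitor by merging. Suppose toward a contradiction that the configuration has $N$ clusters with $N$ very large (or infinite). Then at least $N-1$ of them have total mass at most $(M_1+M_2)/(N-1)$, hence are arbitrarily small for $N$ large. Pick two small clusters $(\om_{1,k},\om_{2,k})$ and $(\om_{1,\ell},\om_{2,\ell})$ and replace them by a single cluster carrying the combined masses $(m_{1,k}+m_{1,\ell},\, m_{2,k}+m_{2,\ell})$. The point is to estimate the energy change. For the perimeter part, merging two clusters of combined mass $m$ into (say) a single ``double bubble''-like configuration costs $O(m^{2/3})$, which for small $m$ is \emph{smaller} than the sum $c(m_k^{2/3}+m_\ell^{2/3})$ of the two separate perimeters only if $m_k,m_\ell$ are comparable; more robustly, one uses that merging can only \emph{decrease} the relevant perimeter if we place the merged cluster optimally, but the honest statement we can afford is a crude upper bound $\H^2$-perimeter of the merged cluster $\le C m^{2/3}$ for some explicit $C$ depending only on the admissible mass ratios. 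For the interaction part, the nonlocal term of a set of diameter $d$ and mass $m$ is at most $C m^2/d_{\min}$-type — more usefully, for a cluster of mass $m$ realized in a ball of radius $\sim m^{1/3}$ one has interaction $\lesssim \gamma\, m^{5/3}$, which is lower order compared to $m^{2/3}$ as $m\to0$. Hence for sufficiently small total mass the merged cluster has strictly smaller energy than the two originals, contradicting optimality.

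Carrying this out carefully, I would fix a threshold $\mu_0 = \mu_0(M_1,M_2,\gamma_{11},\gamma_{22})>0$ such that: (i) any cluster of total mass $\le \mu_0$ is ``small'' in the sense that replacing it and one other small cluster by their merger strictly lowers $\mathcal{E}$; and (ii) by the step-one bound, there are at most $(M_1+M_2)/\mu_0^{3/2}\cdot$(constant) clusters of mass $\ge\mu_0$. Since an optimal configuration cannot admit the merging move, it has at most one cluster of mass $<\mu_0$; adding the bounded number of large clusters yields $K$. The dependence on only $\gamma_{11},\gamma_{22}$ (not $\gamma_{12},\gamma_{21}$) should come from the fact that in the competitor estimates the cross term $\gamma_{12}\int_{\om_1\times\om_2}|x-y|^{-1}$ appears with a favorable sign or can be bounded by the diagonal terms after the merge, so only the self-interaction coefficients enter the threshold; I would make this explicit when bounding the interaction energy of the merged cluster from above.

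The main obstacle I anticipate is exactly the perimeter bookkeeping for the merged cluster: without a quantitative isoperimetric inequality with two mass constraints in 3D (as the introduction stresses), one cannot assert that the merged cluster looks like a nice double bubble, so the upper bound on its perimeter must be obtained by an explicit, possibly wasteful, construction — e.g. nesting or juxtaposing two balls / spherical shells with the prescribed volumes and estimating the interface $\H^2$-measure directly — and one must check this crude construction still beats $c(m_k^{2/3}+m_\ell^{2/3})$ for small masses. The delicate point is that $m_k^{2/3}+m_\ell^{2/3} \ge (m_k+m_\ell)^{2/3}$ always, but the merged construction's perimeter could a priori be as large as a constant multiple of $(m_k+m_\ell)^{2/3}$ with the constant $>1$; one resolves this by \emph{not} merging arbitrary pairs but merging a small cluster into a designated ``reservoir'' cluster (or choosing the pair so that one mass dominates), so that the added interface is a lower-order perturbation of the reservoir's existing perimeter. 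Handling this reservoir bookkeeping, together with the matching interaction estimate, is where the argument needs the most care.
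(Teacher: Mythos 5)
Your high-level strategy (small clusters are energetically inefficient; absorb them into a designated large ``reservoir'' cluster rather than merging arbitrary small pairs) is the right one, and it is essentially the strategy of the paper. But the step you flag as delicate is exactly where your proposal stops short, and the resolution you sketch does not close it. You propose that the added interface from attaching a small cluster to the reservoir be ``a lower-order perturbation of the reservoir's existing perimeter.'' That is the wrong comparison. The gain from deleting a small cluster of masses $(\vep_1,\vep_2)$ is of order $\vep^{2/3}$ (by the double-bubble perimeter lower bound), and any juxtaposition/nesting construction that physically attaches mass $\vep$ to the outside of the reservoir also costs an interface of order $\vep^{2/3}$, with a constant you cannot control because the shapes of optimal clusters are unknown. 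The device that actually works, and that your proposal is missing, is to add the mass by \emph{dilating} the reservoir: replacing $(\om_{1,1},\om_{2,1})$ by $(1+r)^{1/3}(\om_{1,1},\om_{2,1})$ with $r=\vep_1/m_1^+$ increases its energy by at most $3r\,E(\om_{1,1},\om_{2,1})=O(\vep_1)$, i.e.\ \emph{linearly} in the absorbed mass, which is strictly lower order than the $c\,\vep_1^{2/3}$ gain. Without this (or an equivalent linear-cost absorption mechanism) the competitor argument does not produce a contradiction.

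Two further points your proposal does not address. First, scaling the reservoir forces you to add the two constituents in the reservoir's own mass ratio, which generically does not match the small cluster's ratio $(\vep_1,\vep_2)$; the paper resolves this by transferring only the proportional amount, turning the leftover single-constituent mass into a ball, and running a case analysis on the ratios $m_1^+/m_2$ versus $\vep_1/\vep_2$ (and the symmetric case) to guarantee the leftover is nonnegative and that at least one case always applies. Second, clusters consisting of a single constituent type must be treated separately (the mixed-cluster transfer degenerates there); the paper handles these via the known binary result that small-mass minimizers are balls and that two sufficiently small balls of the same type merge favorably. Your first step (the $c\,m_k^{2/3}$ lower bound via Hutchings' inequality, bounding the number of clusters above any fixed mass threshold) is correct and matches the paper's Lemma~\ref{largest clusters are not too small}, and your observation that only $\gamma_{11},\gamma_{22}$ enter (because the upper benchmark is two separate balls) is also correct; but as written the core competitor estimate has a genuine gap.
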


 \medskip
 
 {\bf Notation.} Since the position of the clusters is rarely relevant,
in this paper
 we denote by $B_m$ a ball of mass $m$.

 \section{Uniform upper bound on the number of clusters}
 
 The proof of Theorem \ref{finite and uniform number of clusters}
  will be split over several lemmas. Throughout the entire section,  
  $M_i$, $i=1,2$, will denote the total masses of type $i$ constituent,
  and
   $\gamma_{ij}$, $i,j=1,2$ will denote the interaction coefficients.
   These parameters completely determine the minimization problem for 
   $		\mathcal{E}_{\text{triblock}} $ in 3D.
    All the
   $M_i$ and $\gamma_{ij}$ will assumed to
    be given, and do not change throughout the section.
   Our proof will proceed as follows.
 \begin{enumerate}
 	\item  First, 
 	in Lemma \ref{not too many balls},
 	we bound from above the number of clusters made purely of one constituent type.
 	Such upper bound will depend only on $M_i$, $\gamma_{ii}$, $i=1,2$.
 	
 	\item Then, in Lemma \ref{largest clusters are not too small}, we show that the 
 	largest cluster's mass cannot be too small.
 	 	Such lower bound will depend only on $M_i$, $\gamma_{ii}$, $i=1,2$.
 	
 	\item Finally, in Lemmas \ref{sum of bubbles cannot be too small - different clusters}
 	and \ref{sum of bubbles cannot be too small - same clusters}
 	we show that the total mass of each cluster is bounded from below 
 	by a constant depending only on $M_i$, $\gamma_{ii}$, $i=1,2$.
 	Since there is only so much total mass (i.e., $M_1+M_2$), this allows us to infer 
 	Theorem \ref{finite and uniform number of clusters}.
 \end{enumerate}

 \begin{lem}
 	\label{not too many balls}
 	Consider an optimal configuration, made of clusters $\om_{i,k}$, $i=1,2$, $k\ge 1$.
 	Then 
 	\[ \# \{ k: |\om_{1,k}||\om_{2,k}|=0 \}  \]
 	is bounded from above by a constant depending only on $M_i$, $\gamma_{ii}$, $i=1,2$.
 \end{lem}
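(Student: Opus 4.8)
The plan is to show that a single-constituent cluster (one where $|\om_{1,k}||\om_{2,k}|=0$, so it is a set carrying only type-$1$ mass or only type-$2$ mass) behaves exactly like a minimizer of the binary Gamow-type functional $\text{Per}(\om)+\gamma_{ii}\int_{\om\times\om}|x-y|^{-1}\,\d x\,\d y$ under a single mass constraint, and then to invoke the well-understood structure of that problem. First I would observe that if, say, $|\om_{2,k}|=0$, then in the energy $E(\om_{1,k},\om_{2,k})$ the only surviving terms are $\H^2(\pd\om_{1,k})$ and $\gamma_{11}\int_{\om_{1,k}\times\om_{1,k}}|x-y|^{-1}$, since the cross term and the $\gamma_{22}$ term vanish. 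So each such cluster is a connected minimizer of the classical liquid-drop energy for its own mass $m=|\om_{1,k}|$.

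Next I would use the standard fact (nonexistence of minimizers / energy lower bounds for the liquid drop problem beyond a critical mass, cf. Gamow-type results) together with a subadditivity-versus-splitting argument. The key quantitative input is: there is a threshold mass $m_*(\gamma_{ii})>0$ below which a ball of mass $m$ is the unique minimizer of the one-constituent energy, and for which the minimal energy $e_{ii}(m)$ is strictly concave-like in the relevant range so that $e_{ii}(m_1)+e_{ii}(m_2) > e_{ii}(m_1+m_2)$ whenever $m_1,m_2$ are both small — i.e., it is energetically favourable to merge two small single-type clusters of the same type into one. Concretely, for a ball $B_m$ one has $\text{Per}(B_m)\sim c_1 m^{2/3}$ and the Coulomb term $\sim c_2 m^{5/3}$, and $m\mapsto m^{2/3}$ is strictly subadditive, so merging strictly lowers the perimeter while only mildly raising the Coulomb part as long as the masses are below a fixed constant. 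Hence in an optimal configuration there can be at most one single-type-$1$ cluster whose mass is below $m_*$, and likewise at most one below $m_*$ of type $2$; any others must have mass $\ge m_*(\gamma_{ii})$. Since the total type-$i$ mass is $M_i$, the number of single-type-$i$ clusters of mass $\ge m_*$ is at most $M_i/m_*(\gamma_{ii})$, giving
\[
\#\{k:\ |\om_{1,k}||\om_{2,k}|=0\}\ \le\ 2+\frac{M_1}{m_*(\gamma_{11})}+\frac{M_2}{m_*(\gamma_{22})},
\]
which depends only on $M_i$ and $\gamma_{ii}$ as claimed.

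The main obstacle I anticipate is making the merging step fully rigorous in the present setting: one has to check that merging two small same-type clusters into a single connected competitor does not create an inadmissible configuration and that the perimeter genuinely decreases even accounting for the possibility that the two clusters are not balls. The cleanest route is to compare energies directly with a ball: since each small single-type cluster is itself a minimizer for its mass, its energy is at most that of the ball $B_m$, but also at least the isoperimetric lower bound $c_1 m^{2/3}$; combining these with the ball upper bound $e_{ii}(m)\le c_1 m^{2/3}+c_2\gamma_{ii}m^{5/3}$ and the strict subadditivity of $m^{2/3}$ produces, for all masses below an explicit threshold, the inequality $e_{ii}(m_1)+e_{ii}(m_2)>e_{ii}(m_1+m_2)$, contradicting optimality of the global configuration unless at most one such small cluster of each type is present. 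A secondary technical point is handling the regular-part/torus-versus-$\R^3$ discrepancies, but since the main energy $\mathcal E$ in \eqref{Ohta-Kawasaki energy} already uses the pure Newtonian kernel $|x-y|^{-1}$ on $\R^3$, no such correction is needed here and the computation is the textbook one.
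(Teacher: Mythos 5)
Your proposal is correct and takes essentially the same route as the paper: both reduce a single-type cluster to the binary liquid-drop problem, invoke the known threshold below which the ball is the (unique) minimizer, and use the dominance of the subadditive $m^{2/3}$ perimeter term over the $m^{5/3}$ Coulomb term to show that two sufficiently small same-type clusters can be merged at lower energy, leaving at most one small cluster per type plus finitely many of mass bounded below. Your explicit count $2+M_1/m_*(\gamma_{11})+M_2/m_*(\gamma_{22})$ is just a slightly more spelled-out version of the paper's concluding sentence.
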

 
 \begin{proof}
 	It is well known (see e.g. \cite{knupfer2013isoperimetric,bonacini2014local,
 		muratov2014isoperimetric,frank2015compactness,
 		knupfer2016low,knupfer2019emergence}, and references therein) that there exist $m_{i,B}=m_{i,B}(\gamma_{ii})$, $i=1,2$, such that,
 	for all $m\le m_{i,B}(\gamma_{ii})$,
 	the minimizer of
 	\[\inf_{ |X| = m} \bigg\{ 
 	\H^2(\pd X ) + \gamma_{ii}\int_{X\times X}
 	|x-y|^{-1}\d x\d y \bigg\}\]
 	is given by $B_m$. 
 	Since $\H^2(\pd B_m )$ (resp. $\int_{X\times X}
 	|x-y|^{-1}\d x\d y $ ) scales like $m^{2/3}$ (resp. $m^{5/3}$), the perimeter term
 	is dominating for all sufficiently small masses. Thus there exist geometric constants
 	$m_{i,S}=m_{i,S}(\gamma_{ii})\le m_{i,B}(\gamma_{ii})$ such that
 	\begin{align*}
 	\H^2(\pd B_{m_1} ) &+ \gamma_{ii}\int_{B_{m_1}\times B_{m_1}}
 	|x-y|^{-1}\d x\d y\\
 	&\qquad+\H^2(\pd B_{m_2} ) + \gamma_{ii}\int_{B_{m_2}\times B_{m_2}}
 	|x-y|^{-1}\d x\d y\\
 	&> \H^2(\pd B_{m_1+m_2} ) + \gamma_{ii}\int_{B_{m_1+m_2}\times B_{m_1+m_2}}
 	|x-y|^{-1}\d x\d y, 
 	\end{align*}
 	for all $m_1,m_2\le m_{i,S}(\gamma_{ii})$,
 	i.e. combining the two balls is energetically favorable whenever $m_1,m_2\le m_{i,S}(\gamma_{ii})$.
 	Thus we cannot have two balls of the type $i$ constituent, both
 	with masses less than $m_{i,S}(\gamma_{ii})$. Since the total mass is $M_1+M_2<+\8$,
 	the proof is complete.
 \end{proof}

 \begin{lem}
 	\label{largest clusters are not too small}
 	Consider an optimal configuration, made of clusters $\om_{i,k}$, $i=1,2$, $k\ge 1$.
 	Then
 	\[m_i^+:=\sup_k m_{i,k}  	,\qquad m_{i,k}:=|\om_{i,k}| ,\]
 	is bounded from below by
 	\[  \min\bigg\{ \frac{M_i}{2}, \bigg(
 	\frac{c_i M_i }{2\sum_{i=1}^2 [\sqrt[3]{36\pi} M_i^{2/3} + \gamma_{ii}\int_{ B_{M_i}\times B_{M_i} } 
 		|x-y|^{-1}\d x\d y]}\bigg)^3\bigg\}
.\]
 	%Here $B_{M_1}$ and $B_{M_2}$ denote balls of masses $M_1$ and $M_2$, respectively.
 \end{lem}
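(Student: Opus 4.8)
\medskip

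\noindent\emph{Proof strategy.} The plan is to play a cheap upper bound on the minimal energy against the Euclidean isoperimetric inequality. Put
\[
E^* := \sum_{i=1}^2 \Big[ \sqrt[3]{36\pi}\, M_i^{2/3} + \gamma_{ii} \int_{B_{M_i}\times B_{M_i}} |x-y|^{-1}\d x\d y \Big],
\]
so that the denominator appearing in the statement is exactly $2E^*$. First I would observe that $E^*$ is an attainable energy value: the configuration made of the two single-constituent clusters $(B_{M_1},\emptyset)$ and $(\emptyset,B_{M_2})$, placed disjointly, is admissible (in Section~\ref{Setting up the problem} clusters carrying only one constituent are explicitly allowed, as long as $m_{1,k}+m_{2,k}>0$), has the prescribed total masses, and -- since distinct clusters do not interact under $\mathcal{E}$ -- has energy precisely $E^*$. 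Hence any optimal configuration satisfies $\mathcal{E}\le E^*$.

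Next I would bound $\mathcal{E}$ from below using only the type-$i$ components. For a single cluster $(\om_{1,k},\om_{2,k})$, decomposing the reduced boundary $\pd\om_{i,k}$ into the portion facing the exterior and the portion facing the other constituent shows that the cluster's perimeter term dominates $\tfrac12\big(\H^2(\pd\om_{1,k})+\H^2(\pd\om_{2,k})\big)$, in particular $\tfrac12\H^2(\pd\om_{i,k})$. Summing over $k$ and applying the isoperimetric inequality to each connected component,
\[
\mathcal{E}\ \ge\ \tfrac12\sum_k \H^2(\pd\om_{i,k})\ \ge\ \tfrac12\sqrt[3]{36\pi}\sum_k m_{i,k}^{2/3}.
\]
Since $m_{i,k}^{2/3}=m_{i,k}\,m_{i,k}^{-1/3}\ge m_{i,k}\,(m_i^+)^{-1/3}$ and $\sum_k m_{i,k}=M_i$, the last sum is at least $M_i/(m_i^+)^{1/3}$, so
\[
E^*\ \ge\ \mathcal{E}\ \ge\ \tfrac12\sqrt[3]{36\pi}\,\frac{M_i}{(m_i^+)^{1/3}}.
\]
Rearranging yields $m_i^+\ge\big(\sqrt[3]{36\pi}\,M_i/(2E^*)\big)^3$, which is the second term in the claimed minimum (with $c_i=\sqrt[3]{36\pi}$; a sharper boundary estimate would even delete the factor $2$). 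The minimum with $M_i/2$ then costs nothing: either $m_i^+\ge M_i/2$, and there is nothing to prove, or $m_i^+<M_i/2$ and the chain above applies verbatim.

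I do not expect a genuine obstacle here. The two points needing care are only: checking that the two-ball, one-constituent-each configuration is a legitimate admissible competitor (so that $E^*$ really is an upper bound on the minimal energy), and the elementary boundary bookkeeping showing that the cluster perimeter term controls $\H^2(\pd\om_{i,k})$. Importantly, only the \emph{qualitative} isoperimetric inequality enters, so the lack of a quantitative isoperimetric inequality with two mass constraints -- the recurring difficulty highlighted in the introduction -- does not interfere at this step.
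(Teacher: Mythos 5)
Your proof is correct and follows essentially the same strategy as the paper's: play the two-ball upper bound $E^*$ against a lower bound of the form $c_i\sum_k m_{i,k}^{2/3}$ and rearrange. The only (harmless) deviations are that you obtain the per-cluster bound $\tfrac12\sqrt[3]{36\pi}\,(m_{1,k}^{2/3}+m_{2,k}^{2/3})$ from interface bookkeeping plus the plain isoperimetric inequality rather than from Hutchings' double-bubble estimate (same constant), and you replace the paper's concavity/floor-function step by the pointwise inequality $m_{i,k}^{2/3}\ge m_{i,k}(m_i^+)^{-1/3}$, which in fact yields a slightly sharper constant and renders the case split on $M_i/2$ unnecessary.
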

 
 Note that, curiously, this lower bound is independent of $\gamma_{12}$. As it will be clear
 from the proof, this is due to the fact that an upper bound for the energy of an optimal
 configuration is given by the energy of two balls
 of masses $M_1$ and $M_2$ respectively. Such upper bound is clearly independent of $\gamma_{12}$.
 
 \begin{proof}
 	The idea is that, for very small masses, the perimeter term is sub-addictive and dominating.
 	Assume $m_i^+\le M_i/2$, as otherwise $M_i/2$ is already a lower bound.
% 	For brevity, we will write $E$ instead of $	\mathcal{E}_{\text{triblock}}$, i.e.
%
 	%
 	Note that
 	\[ E(\om_{1,k},\om_{2,k}) \ge \mathcal{S}(m_{1,k},m_{2,k}) \qquad\fal k\ge 1, \]
 	where
 	\[\mathcal{S}(m_{1},m_{2})  = \text{perimeter of the standard double bubble with masses } 
 	 m_1 \text{ and } m_2,\]
 	and,  by \cite[Theorem~4.2]{Hutchings1997TheSO} (applied with $v_1=m_1$, $x=v_2=m_2$, $n=3$)
 	\[\mathcal{S}(m_1,m_2)\ge  \sum_{i=1}^2c_i m_i^{2/3},\qquad  c_1=c_2=\frac{\sqrt[3]{36\pi}}{2}.  \]
 	 Thus the total energy of our optimal configuration satisfies
 	\begin{align*}
 	\sum_{k\ge 1}E(\om_{1,k},\om_{2,k}) \ge  \sum_{i=1}^2  c_i \sum_{k\ge 1} m_{i,k}^{2/3}.
 	\end{align*}
 	By the concavity of the function $t\mapsto t^{2/3}$, the sum $\sum_{k\ge 1} m_{i,k}^{2/3}$ is minimum
 	when $m_{i,k}\in \{0,m_i^+\}$ for all $k$. Since $\sum_{k\ge 1} m_{i,k}=M_i$,
 	there are
 	at least $\lfloor \frac{M_i}{m_i^+} \rfloor$ many clusters containing type $i$ constituents,
 	thus
 	\begin{align*}
 	\sum_{k\ge 1}E(\om_{1,k},\om_{2,k}) &\ge  \sum_{i=1}^2  c_i \sum_{k\ge 1} m_{i,k}^{2/3}
 	\ge \sum_{i=1}^2  c_i \Big\lfloor \frac{M_i}{m_i^+} \Big\rfloor (m_{i}^+)^{2/3}\\
 	&\ge \sum_{i=1}^2  c_i \frac{M_i-m_i^+}{(m_{i}^+)^{1/3}}  
 	\ge \sum_{i=1}^2  \frac{c_i}{2} \frac{M_i}{(m_{i}^+)^{1/3}}  .
 	\end{align*}
 	Since our configuration was an optimal one, its energy does not exceed that of two balls,
 	which we denote by $B_{M_1}$ and $B_{M_2}$,
 	of masses $M_1$ and $M_2$, respectively. Thus the above line continues as
 	\begin{align*}
 	\sum_{i=1}^2  \frac{c_i}{2} \frac{M_i}{(m_{i}^+)^{1/3}}  
 	&\le \sum_{k\ge 1}E(\om_{1,k},\om_{2,k})\\
 	&\le \sum_{i=1}^2 \Big[\sqrt[3]{36\pi} M_i^{2/3} + \gamma_{ii}\int_{ B_{M_i}\times B_{M_i} } 
 	|x-y|^{-1}\d x\d y\Big] ,
 	\end{align*}
 	hence 
 	\[ (m_{i}^+)^{1/3}\ge \frac{c_i M_i }{2\sum_{i=1}^2 [\sqrt[3]{36\pi} M_i^{2/3} + \gamma_{ii}\int_{ B_{M_i}\times B_{M_i} } 
 		|x-y|^{-1}\d x\d y]}, \]
 	and the proof is complete.
 \end{proof}

\begin{lem}
	\label{sum of bubbles cannot be too small - different clusters}
	Consider an optimal configuration, made of clusters $\om_{i,k}$, $i=1,2$, $k\ge 1$.
	Assume $\sup_k |\om_{1,k}|$ and $\sup_k |\om_{2,k}|$ are achieved on different clusters,
	i.e., without loss of generality,
	\[|\om_{1,1}|=m_1^+=\sup_k |\om_{1,k}|,\qquad |\om_{2,2}|=n_2^+=\sup_k |\om_{2,k}|.\]
		Then
	\[ \inf_k  \sum_{i=1}^2 |\om_{i,k}| \]
	is bounded from below by a constant depending only on $M_i$, $\gamma_{ii}$, $i=1,2$.
\end{lem}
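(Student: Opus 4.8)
The plan is to argue by contradiction. Suppose some cluster, say cluster $k_0$, has total mass $\veps:=|\om_{1,k_0}|+|\om_{2,k_0}|$ below a threshold $\dt_0=\dt_0(M_i,\gamma_{ii})$ to be fixed at the end, and set $a:=|\om_{1,k_0}|$, $b:=|\om_{2,k_0}|$. By Lemma~\ref{largest clusters are not too small} the two clusters carrying the maximal components $\om_{1,1}$ and $\om_{2,2}$ have total mass at least $\mu_1$ and $\mu_2$ respectively, where $\mu_i$ denotes the lower bound for $m_i^+$ from that lemma, so assuming $\dt_0\le\min\{\mu_1,\mu_2\}$ we may take $k_0\notin\{1,2\}$. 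I would then build a competitor by \emph{deleting} cluster $k_0$ and dumping its mass onto the two largest components: after translating the cluster of $\om_{1,1}$ far from the rest of the configuration (harmless, since $\mathcal E$ contains no inter-cluster interaction), glue onto the portion of $\pd\om_{1,1}$ facing $\om_0$ a thin type-$1$ collar $A_1$ of volume $a$, obtaining a connected set $\~\om_{1,1}\supseteq\om_{1,1}$ with $|\~\om_{1,1}|=m_1^++a$ and $\~\om_{1,1}\cap\om_{2,1}=\emptyset$, and likewise attach a type-$2$ collar $A_2$ of volume $b$ to $\om_{2,2}$; everything else --- in particular $\om_{2,1}$, $\om_{1,2}$ and all remaining clusters --- is left untouched. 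This is admissible, keeps the total masses $M_1,M_2$ fixed, and changes only $E(\om_{1,1},\om_{2,1})$ and $E(\om_{1,2},\om_{2,2})$ while removing the term $E(\om_{1,k_0},\om_{2,k_0})$.

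The energy accounting is the heart of the matter. Deleting cluster $k_0$ saves, exactly as in the proof of Lemma~\ref{largest clusters are not too small} (the bound $E\ge\mathcal S$, Hutchings's inequality, and $a^{2/3}+b^{2/3}\ge(a+b)^{2/3}$),
\[
E(\om_{1,k_0},\om_{2,k_0})\ \ge\ \mathcal S(a,b)\ \ge\ \tfrac{\sqrt[3]{36\pi}}{2}\bigl(a^{2/3}+b^{2/3}\bigr)\ \ge\ \tfrac{\sqrt[3]{36\pi}}{2}\,\veps^{2/3}.
\]
Attaching $A_1$ raises the perimeter only by (area of the new outer face of $\~\om_{1,1}$) minus (area of the patch of $\pd\om_{1,1}$ it covers); since $|\om_{1,1}|=m_1^+\in[\mu_1,M_1]$ while $\H^2(\pd\om_{1,1})$ is at most the energy $E_{\max}=E_{\max}(M_i,\gamma_{ii})$ of two far-apart balls $B_{M_1},B_{M_2}$, the collar can be taken of thickness $\asymp a$ spread over a patch of $\pd\om_{1,1}$ of definite size, making this perimeter penalty $o(\veps^{2/3})$ --- in fact $\le Ca$. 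For the interaction part one uses the rearrangement bound $\int_{\om}|x-y|^{-1}\d y\le C|\om|^{2/3}$ together with $|\om_{1,1}|,|\om_{2,1}|\le M_i$ to get $\int_{A_1\times\om_{1,1}}|x-y|^{-1}\d x\d y\le Ca$, $\int_{A_1\times\om_{2,1}}|x-y|^{-1}\d x\d y\le Ca$ and $\int_{A_1\times A_1}|x-y|^{-1}\d x\d y\le Ca^{5/3}$, whence $E(\~\om_{1,1},\om_{2,1})-E(\om_{1,1},\om_{2,1})\le Ca$; symmetrically the $A_2$-step costs $\le Cb$. Thus the competitor lowers the total energy by at least $\tfrac{\sqrt[3]{36\pi}}{2}\veps^{2/3}-C\veps$, which is positive once $\veps<\dt_0$ for a suitable $\dt_0(M_i,\gamma_{ii})$, contradicting optimality; hence no cluster can have total mass below $\dt_0$, which is the claim.

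The genuinely delicate step --- and the reason the 2D argument does not carry over --- is the perimeter bound for the collar. Lacking any quantitative isoperimetric inequality under two volume constraints in 3D, we do not know the shape of $\om_{1,1}$, so we cannot simply ``coat a ball'', yet we must add the volume $a$ to it at cost $o(a^{2/3})$, well below the $\sqrt[3]{36\pi}\,a^{2/3}$ one would pay by attaching a tiny ball --- a cost which would \emph{not} be beaten by the savings $\mathcal S(a,b)$. Making this rigorous requires extracting from ``$|\om_{1,1}|\ge\mu_1$ and $\H^2(\pd\om_{1,1})\le E_{\max}$'', together with the Euler--Lagrange/regularity information $\om_{1,1}$ carries as a volume-constrained minimizer of $X\mapsto E(X,\om_{2,1})$, a patch of $\pd\om_{1,1}$ of controlled area and curvature along which a thin layer can be laid with only a lower-order area penalty while staying disjoint from $\om_{2,1}$. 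The hypothesis that $m_1^+$ and $n_2^+$ are attained on \emph{different} clusters enters precisely here: it lets the two collars sit in different clusters, so they never compete for room around the same pair of interacting sets; when both maxima lie in one cluster the competitor must be reorganised, which is the subject of Lemma~\ref{sum of bubbles cannot be too small - same clusters}.
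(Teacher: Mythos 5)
Your overall energy-comparison strategy (delete a cluster of total mass $\veps$, reabsorb its mass into the largest clusters, and play an $O(\veps)$ insertion cost against a $c\,\veps^{2/3}$ perimeter saving) is exactly the philosophy of the paper's proof, and your lower bound on the saving via Hutchings' inequality is the same as the paper's. But the mechanism you use to reabsorb the mass --- gluing a thin collar of volume $a$ onto a patch of $\pd\om_{1,1}$ at perimeter cost $O(a)$ --- is precisely the step you cannot carry out, and you acknowledge as much without closing it. To make the collar cost $o(\veps^{2/3})$ you need a patch of $\pd\om_{1,1}\cap\pd\om_0$ of area and curvature controlled \emph{uniformly} in terms of $M_i,\gamma_{ii}$ alone (the constant in the lemma may not depend on the particular minimizer). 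The bounds you actually have, $|\om_{1,1}|\ge\mu_1$ and $\H^2(\pd\om_{1,1})\le E_{\max}$, give no such patch: a priori $\pd\om_{1,1}\cap\pd\om_0$ could even be empty (type~I enclosed by type~II), and quasi-minimizer regularity theory, besides not being developed in the paper, would only produce an almost-flat patch at a scale depending on the individual minimizer. This is not a technicality one can wave away with ``Euler--Lagrange information'': the paper's abstract identifies the unknown shape of minimizers as the central obstruction, and the whole point of the argument is to build competitors that never need to touch the boundary of a large cluster.

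The paper's way around this is a dilation: it adds mass to cluster $1$ by replacing $(\om_{1,1},\om_{2,1})$ with $(1+r)^{1/3}(\om_{1,1},\om_{2,1})$, $r=\vep_1/m_1^+$, whose energy increase is bounded by $3r\,E(\om_{1,1},\om_{2,1})\le C\vep_1$ by pure scaling, with no geometric information about the cluster whatsoever. The price is that dilation injects mass of \emph{both} types in the fixed ratio $m_1^+:m_2$, so one can only do this when $\frac{m_1^+}{m_2}\ge\frac{\vep_1}{\vep_2}$ (the leftover type-II mass $\vep_2-rm_2$ goes into a separate ball, and the saving is extracted from $\mathcal S(\vep_1,\vep_2-rm_2)\ge\mathcal S(0,\vep_2-rm_2)+c_1\vep_1^{2/3}$); the complementary case is handled symmetrically through cluster $2$, and the hypothesis that the two maxima sit on different clusters is what makes the two cases exhaustive via $\frac{m_2}{m_1^+}\le\frac{n_2^+}{n_1}$. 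Also note the paper's conclusion in each case is a lower bound on a single $\vep_i$, not on $\vep_1+\vep_2$ directly, which still suffices. If you replace your collar step with this dilation step, your argument becomes the paper's; as written, the proposal has a genuine gap at its load-bearing step.
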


\begin{proof}
	Consider a cluster $(\om_{1,k},\om_{2,k})$, with $k\ge 3$, and let 
	$$ |\om_{2,1}|=:m_2,\qquad |\om_{1,2}|=n_1,\qquad \vep_i:=\om_{i,k}>0,\ i=1,2.$$
	Note that $m_1^+\ge n_1$, $n_2^+\ge m_2$. 
	The construction will be slightly different depending on the values
	of $\frac{m_1^+}{m_2}$, $\frac{m_2}{n_2^+}$, and $\frac{\vep_1}{\vep_2}$.
	
	\medskip
	
	{\em Case 1:} $\frac{m_1^+}{m_2}\ge \frac{\vep_1}{\vep_2}$.
	Consider the competitor constructed
	in the following way (see Figure \ref{case1 figure}).
\begin{itemize}
	\item Move mass $\vep_1$ (resp. $rm_2$, with $r:=\frac{\vep_1}{m_1^+}\le 1$) of type I (resp. type II) constituent  
	from the cluster $(\om_{1,k},\om_{2,k})$ to $(\om_{1,1},\om_{2,1})$. This is possible
	since
	we are discussing the case
$\frac{m_1^+}{m_2}\ge \frac{\vep_1}{\vep_2}$, i.e. $rm_2=\vep_1\frac{m_2}{m_1^+}\le \vep_2$.

	\item Replace $(\om_{1,k},\om_{2,k})$ and $(\om_{1,1},\om_{2,1})$
	with $B_{\vep_2 -rm_2}$ (of type II constituent) and $
	(\~\om_{1,1},\~\om_{2,1}):=
	(1+r)^{1/3}(\om_{1,1},\om_{2,1})$ respectively, while every other cluster
		remains unaltered. 
\end{itemize}

\begin{figure}
	\includegraphics[scale=.9]{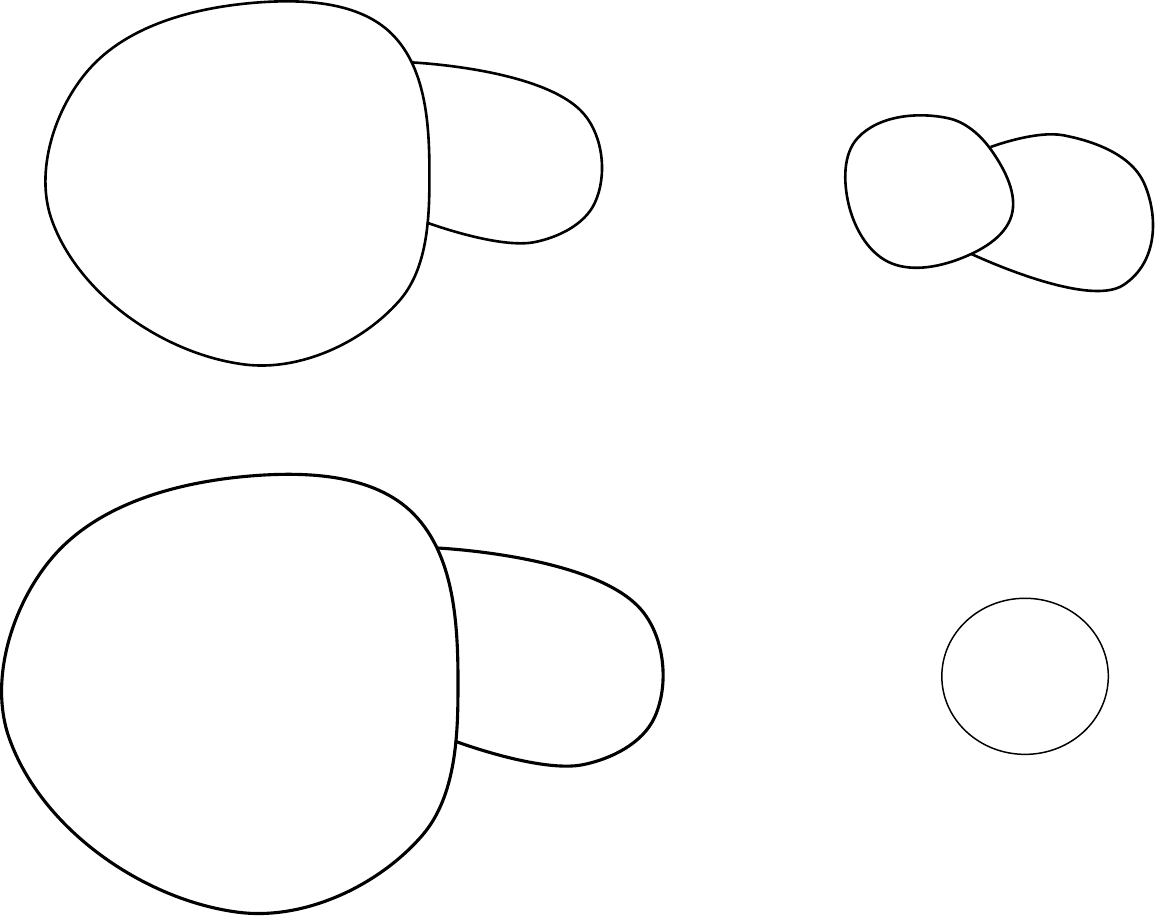}
	\put(-250,190){$\Omega_{1,1}$}
		\put(-177,195){$\Omega_{2,1}$}
	\put(-67,185){$\Omega_{1,k}$}
		\put(-30,180){$\Omega_{2,k}$}
	\put(-250,55){$\~\Omega_{1,1}$}
\put(-167,65){$\~\Omega_{2,1}$}
	\put(-52,60){$B_{\vep_2 -rm_2}$}
	\caption{Schematic representation of the construction of the competitor: 
		original clusters (top), and modified clusters (bottom).
		Though
	the objects in question are three dimensional, 
	for better clarity,
	we represented the construction in two dimensions. 
	Only the affected clusters are represented here.
The clusters 
are drawn deliberately deformed, to emphasize the fact that we do not know the clusters' precise
shapes.}
\label{case1 figure}
\end{figure}

Now we estimate the change in energy. Since our initial configuration was optimal,
\begin{align}
0&\le  E((1+r)^{1/3}(\om_{1,1},\om_{2,1}))+E(\emptyset,
B_{\vep_2 -rm_2})\notag\\
&\qquad -E(\om_{1,1},\om_{2,1})-E(\om_{1,k},\om_{2,k}).
\label{energy change - nonnegative due to optimal}
\end{align}
By a straightforward scaling argument,
\begin{align*}
E&((1+r)^{1/3}(\om_{1,1},\om_{2,1}))  \\
& = (1+r)^{2/3} \sum_{0\le i< j\le 2}\H^2(\pd \om_{i,1}\cap \pd\om_{j,1}),\qquad\om_{0,1}:=(\om_{1,1}\cup \om_{2,1})^c,\\
&\qquad +(1+r)^{5/3} \sum_{i,j=1}^2 \gamma_{ij}\int_{\om_{i,1}\times \om_{j,1}}|x-y|^{-1}\d x\d y\\
&\le (1+r)  \sum_{0\le i< j\le 2}\H^2(\pd \om_{i,1}\cap \pd\om_{j,1})+(1+3r)
\sum_{i,j=1}^2 \gamma_{ij}\int_{\om_{i,1}\times \om_{j,1}}|x-y|^{-1}\d x\d y\\
&\le (1+3r) \bigg[\underbrace{ \sum_{0\le i< j\le 2}\H^2(\pd \om_{i,1}\cap \pd\om_{j,1})+
\sum_{i,j=1}^2 \gamma_{ij}\int_{\om_{i,1}\times \om_{j,1}}|x-y|^{-1}\d x\d y}_{ = 
E(\om_{1,1},\om_{2,1}) }\bigg],
\end{align*}
where we used the estimates
\begin{align*}
(1+r)^{2/3} \le 1+r\le 1+3r,\qquad (1+r)^{5/3} \le (1+r)^2 \overset{(r\le 1)}{\le}1+3r.
\end{align*}
Thus, in view of Lemma \ref{largest clusters are not too small},
\begin{align}
E((1+r)^{1/3}(\om_{1,1},\om_{2,1}))& -E(\om_{1,1},\om_{2,1})
\le 3r E(\om_{1,1},\om_{2,1})\le \vep_1 H_1(M_1,M_2,\gamma_{11},\gamma_{22}),\notag\\
H_1(M_1,M_2,\gamma_{11},\gamma_{22})&:=
 \sum_{i=1}^2  \frac{3}{m_1^+}\bigg[\sqrt[3]{36\pi} M_i^{2/3} + \gamma_{ii}\int_{ B_{M_i}\times B_{M_i} } 
|x-y|^{-1}\d x\d y\bigg] .\label{energy change - due to scaling - case 1}
\end{align}
Now we estimate $E(\emptyset,B_{\vep_2 -rm_2}) -E(\om_{1,k},\om_{2,k})$:
\begin{align*}
E(\emptyset,B_{\vep_2 -rm_2}) &-E(\om_{1,k},\om_{2,k})  \le \mathcal{S}(0,\vep_2 -rm_2)
-\mathcal{S}(\vep_1,\vep_2 )\\
&=\mathcal{S}(0,\vep_2 -rm_2)-\mathcal{S}(\vep_1,\vep_2-rm_2 )+\mathcal{S}(\vep_1,\vep_2-rm_2 )
-\mathcal{S}(\vep_1,\vep_2 )\\
&
\le -c_1  \vep_1^{2/3} ,\qquad c_1:=\frac{\sqrt[3]{36\pi}}{2},
\end{align*}
  where the last line is due to
	\cite[Theorem~3.2]{Hutchings1997TheSO}, which gives
	\[\mathcal{S}(\vep_1,\vep_2-rm_2 )
	-\mathcal{S}(\vep_1,\vep_2 ) \le 0,\]
	 and \cite[Theorem~4.2]{Hutchings1997TheSO}
(applied with $v_1=\vep_1$, $x=v_2=\vep_2 -rm_2$, $n=3$), which gives
\begin{align*}
\mathcal{S}(\vep_1,\vep_2 -rm_2) & \ge \frac{\sqrt[3]{36\pi}}{2} [ \vep_1^{2/3} + (\vep_2 -rm_2)^{2/3}+(\vep_1+\vep_2 -rm_2)^{2/3}] \\
&\ge  \frac{\sqrt[3]{36\pi}}{2} [ \vep_1^{2/3} +2 (\vep_2 -rm_2)^{2/3}] 
=\frac{\sqrt[3]{36\pi}}{2}\vep_1^{2/3}+ \underbrace{ \sqrt[3]{36\pi}(\vep_2 -rm_2)^{2/3}}_{=\mathcal{S}(0,\vep_2 -rm_2)}.
\end{align*}
Combining with \eqref{energy change - nonnegative due to optimal} and \eqref{energy change - due to scaling - case 1} gives the necessary condition
\begin{align}
0&\le E((1+r)^{1/3}(\om_{1,1},\om_{2,1}))+E(\emptyset,
B_{\vep_2 -rm_2}) -E(\om_{1,1},\om_{2,1})-E(\om_{1,k},\om_{2,k})\notag\\
&\le \vep_1 H_1(M_1,M_2,\gamma_{11},\gamma_{22}) -c_1  \vep_1^{2/3} ,
\label{energy change - due to perimeter - case 1}
\end{align}
hence
\[\vep_1^{1/3}\ge H_1(M_1,M_2,\gamma_{11},\gamma_{22}) c_1^{-1},\]
thus completing the proof for this case.

\medskip

{\em Case 2:} $\frac{n_2^+}{n_1}\ge \frac{\vep_2}{\vep_1}$.
The competitor constructed
in a way similar to the previous case.
\begin{itemize}
	\item Move mass $\vep_2$ (resp. $rn_1$, with $r:=\frac{\vep_2}{n_2^+}\le 1$) of type II (resp. type I) constituent  
	from the cluster $(\om_{1,k},\om_{2,k})$ to $(\om_{1,2},\om_{2,2})$. This is possible
	since
	we are discussing the case
$\frac{n_2^+}{n_1}\ge \frac{\vep_2}{\vep_1}$, i.e. $rn_1=\vep_2\frac{n_1}{n_2^+}\le \vep_1$.

	\item Replace $(\om_{1,k},\om_{2,k})$ and $(\om_{1,2},\om_{2,2})$
	with $B_{\vep_1 -rn_1}$ (of type I constituent) and $(1+r)^{1/3}(\om_{1,2},\om_{2,2})$ respectively, while every other cluster
	remains unaltered. 
\end{itemize}
Then the proof proceeds like in the previous case. With the same arguments from Case 1, we obtain 
\begin{align*}
E((1+r)^{1/3}(\om_{1,2},\om_{2,2}))& -E(\om_{1,2},\om_{2,2})
\le 3r E(\om_{1,2},\om_{2,2})\le \vep_2 H_2(M_1,M_2,\gamma_{11},\gamma_{22}),\notag\\
H_2(M_1,M_2,\gamma_{11},\gamma_{22})&:=
\sum_{i=1}^2  \frac{3}{n_2^+}\bigg[\sqrt[3]{36\pi} M_i^{2/3} + \gamma_{ii}\int_{ B_{M_i}\times B_{M_i} } 
|x-y|^{-1}\d x\d y\bigg],
\end{align*}
which is the analogue of \eqref{energy change - due to scaling - case 1}, and
\begin{align*}
0&\le E((1+r)^{1/3}(\om_{1,2},\om_{2,2}))+E(\emptyset,
B_{\vep_1 -rn_1}) -E(\om_{1,2},\om_{2,2})-E(\om_{1,k},\om_{2,k})\notag\\
&\le \vep_2 H_2(M_1,M_2,\gamma_{11},\gamma_{22}) -c_2  \vep_2^{2/3},
\end{align*}
for some computable, purely geometric constant $c_2>0$,
which is the analogue of \eqref{energy change - due to perimeter - case 1}.
Thus
\[ \vep_2^{1/3} \ge H_2(M_1,M_2,\gamma_{11},\gamma_{22}) c_2^{-1}, \]
concluding the proof for this case.

\medskip

Finally, note that the above two cases are exhaustive: if Case 1
does not hold, i.e. $\frac{\vep_2}{\vep_1}< \frac{m_2}{m_1^+}$, using 
$m_1^+\ge n_1$, $n_2^+\ge m_2$, we get
\[ \frac{\vep_2}{\vep_1}< \frac{m_2}{m_1^+}\le \frac{n_2^+}{n_1}, \]
i.e. Case 2 holds.
The proof is thus complete.
\end{proof}
 
 \begin{lem}
 	\label{sum of bubbles cannot be too small - same clusters}
 	Consider an optimal configuration, made of clusters $\om_{i,k}$, $i=1,2$, $k\ge 1$.
 	Assume $\sup_k |\om_{1,k}|$ and $\sup_k |\om_{2,k}|$ are achieved on the same clusters,
 	i.e., without loss of generality,
 	\[|\om_{i,1}|=m_i^+=\sup_k |\om_{i,k}|,\quad i=1,2.\]
 	Then
 	\[ \inf_k  \sum_{i=1}^2 |\om_{i,k}| \]
 	is again bounded from below by a constant depending only on $M_i$, $\gamma_{ii}$, $i=1,2$.
 \end{lem}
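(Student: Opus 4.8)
The plan is to run the same argument as in Lemma~\ref{sum of bubbles cannot be too small - different clusters}, which in fact becomes \emph{simpler} here: since now one and the same cluster, namely $(\om_{1,1},\om_{2,1})$, realizes both suprema, we have the two inequalities $|\om_{1,k}|\le m_1^+$ and $|\om_{2,k}|\le m_2^+$ simultaneously for every $k$, so no case distinction on mass ratios is needed. As in Lemma~\ref{not too many balls}, clusters made of a single constituent are already accounted for (there are finitely many of them and the available mass is $M_1+M_2$), so it is enough to bound from below $\sum_{i=1}^2|\om_{i,k}|$ for a cluster of index $k\ge 2$ with $\vep_i:=|\om_{i,k}|>0$, $i=1,2$.

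First I would fix such a $k$ and, after relabelling the two constituents if necessary, assume $\frac{\vep_1}{m_1^+}\le \frac{\vep_2}{m_2^+}$; set $r:=\frac{\vep_1}{m_1^+}\le 1$. Then I would use verbatim the competitor of Case~1 of Lemma~\ref{sum of bubbles cannot be too small - different clusters}, with $(\om_{1,1},\om_{2,1})$ playing the role of the dominating cluster: move mass $\vep_1$ of type~I and mass $rm_2^+$ of type~II out of $(\om_{1,k},\om_{2,k})$ and into $(\om_{1,1},\om_{2,1})$ --- admissible because $rm_2^+=\vep_1\,m_2^+/m_1^+\le \vep_2$ --- then replace $(\om_{1,1},\om_{2,1})$ by $(1+r)^{1/3}(\om_{1,1},\om_{2,1})$, whose masses are exactly $(1+r)(m_1^+,m_2^+)$, and replace the residue of the $k$-th cluster by the single ball $B_{\vep_2-rm_2^+}$ of type~II; every other cluster is left unchanged.

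The energy accounting is then identical to that of Case~1. By the scaling estimate, combined with the a~priori bound that the optimal energy is at most the energy of the two balls $B_{M_1}$ and $B_{M_2}$, and using Lemma~\ref{largest clusters are not too small} to guarantee that $m_1^+$ is bounded below by a quantity of the allowed form, one obtains
\begin{align*}
E\big((1+r)^{1/3}(\om_{1,1},\om_{2,1})\big)-E(\om_{1,1},\om_{2,1})\le 3r\,E(\om_{1,1},\om_{2,1})\le \vep_1\,H(M_1,M_2,\gamma_{11},\gamma_{22}),
\end{align*}
while Hutchings' monotonicity \cite[Theorem~3.2]{Hutchings1997TheSO} and lower bound \cite[Theorem~4.2]{Hutchings1997TheSO} (applied with $v_1=\vep_1$, $x=v_2=\vep_2-rm_2^+$, $n=3$) give
\begin{align*}
E(\emptyset,B_{\vep_2-rm_2^+})-E(\om_{1,k},\om_{2,k})\le \mathcal{S}(0,\vep_2-rm_2^+)-\mathcal{S}(\vep_1,\vep_2)\le -c_1\vep_1^{2/3}.
\end{align*}
Because the starting configuration is optimal, summing these two contributions yields $0\le \vep_1 H-c_1\vep_1^{2/3}$, hence $\vep_1\ge (c_1/H)^3$, and therefore $\sum_{i=1}^2|\om_{i,k}|\ge \vep_1\ge (c_1/H)^3$, a constant depending only on $M_i$ and $\gamma_{ii}$.

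The hard part will not really be the geometry --- unlike in the previous lemma, the single dominating cluster makes the bound $r\le 1$ automatic and removes the need for an exhaustiveness discussion --- but the bookkeeping: one must check that the competitor is genuinely admissible (connectedness of $B_{\vep_2-rm_2^+}$ and of the rescaled $(\om_{1,1},\om_{2,1})$, and that the moved masses actually fit inside $(\om_{1,k},\om_{2,k})$, which is exactly where $|\om_{i,k}|\le m_i^+$ is used), and that the constants $H$ and $c_1$ depend only on $M_1,M_2,\gamma_{11},\gamma_{22}$, the former through Lemma~\ref{largest clusters are not too small}.
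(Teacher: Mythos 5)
Your proposal is correct and follows essentially the same route as the paper: the paper likewise reduces to the two constructions of Lemma~\ref{sum of bubbles cannot be too small - different clusters} via the dichotomy $\frac{m_1^+}{m_2^+}\ge \frac{\vep_1}{\vep_2}$ or $\frac{m_2^+}{m_1^+}\ge\frac{\vep_2}{\vep_1}$, which is exactly your ``relabel the constituents so that $\vep_1/m_1^+\le\vep_2/m_2^+$'' step. Your additional checks (admissibility $rm_2^+\le\vep_2$, $r\le 1$, and the dependence of the constants only on $M_i,\gamma_{ii}$ via Lemma~\ref{largest clusters are not too small}) are all consistent with what the paper does implicitly.
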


 \begin{proof}
 	We rely on Lemma \ref{sum of bubbles cannot be too small - different clusters}:
Consider another cluster $(\om_{1,k},\om_{2,k})$, $k\ge 2$. Let 
$|\om_{1,k}|=\vep_1>0$, 
$|\om_{2,k}|=\vep_2>0$, and note that one of the following cases must hold.
\begin{enumerate}
	\item If $\frac{m_1^+}{m_2^+} \ge \frac{\vep_1}{\vep_2}$, then we can use the construction
	from Case 1 of Lemma \ref{sum of bubbles cannot be too small - different clusters}.
	
	\item If $\frac{m_1^+}{m_2^+} \le \frac{\vep_1}{\vep_2}$, 
	i.e. $\frac{m_2^+}{m_1^+} \ge \frac{\vep_2}{\vep_1}$,
	then we can use the construction
	from Case 2 of Lemma \ref{sum of bubbles cannot be too small - different clusters}.
\end{enumerate}
The proof is thus complete.
 \end{proof}

\section*{Acknowledgments}
XYL acknowledges the support of NSERC. The research of J. Wei is partially supported by NSERC.
We are grateful to Chong Wang for useful discussions and suggestions.

\bibliographystyle{siam}
\bibliography{bibitems}

\end{document}